\title{On Nakayama's theorem}
\author{Osamu Fujino}
\date{2021/7/16, version 0.11}
\subjclass[2010]{Primary 14C20; Secondary 14E30}
\keywords{negativity lemma, exceptional divisors, reflexive sheaves}
\dedicatory{Dedicated to Professor Noboru Nakayama on the 
occasion of his sixtieth birthday}
\address{Department of 
Mathematics, Graduate School of Science, 
Kyoto University, Kyoto 606-8502, Japan}
\email{fujino@math.kyoto-u.ac.jp}
\DeclareMathOperator{\Supp}{Supp}
\DeclareMathOperator{\codim}{codim}
\newtheorem{thm}{Theorem}[section]
\newtheorem{lem}[thm]{Lemma}
\newtheorem{prop}[thm]{Proposition}
\theoremstyle{definition}
\newtheorem{step}{Step}
\newtheorem{defn}[thm]{Definition}
\newtheorem{rem}[thm]{Remark}
\newtheorem*{ack}{Acknowledgments}  
\begin{document}

\maketitle 

\begin{abstract} 
The main purpose of this paper is 
to make Nakayama's theorem more accessible. 
We give a proof of Nakayama's theorem based on 
the negative definiteness of intersection matrices of exceptional curves. 
In this paper, we treat Nakayama's theorem on algebraic 
varieties over any algebraically closed field of 
arbitrary 
characteristic although Nakayama's original statement 
is formulated for complex analytic 
spaces. 
\end{abstract}


\section{Introduction}

In this paper, a {\em{variety}} means an integral separated scheme of 
finite type over an algebraically closed field $k$ of any characteristic. 
The following theorem is very well known and 
plays a crucial role 
in the theory of higher-dimensional minimal models. 

\begin{thm}\label{f-thm1.1}
Let $f\colon X\to Y$ be a projective birational morphism 
from a smooth surface $X$ to a normal surface $Y$. 
Then the intersection matrix of the $f$-exceptional 
curves 
is negative definite. 
\end{thm}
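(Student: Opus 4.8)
The plan is to exhibit the intersection matrix, up to congruence, as a real symmetric matrix $C=(c_{ij})$ all of whose off-diagonal entries are nonnegative and all of whose row sums are nonpositive, with at least one row sum strictly negative and with the relation $c_{ij}\neq 0$ connecting the index set; an elementary computation then forces such a $C$ to be negative definite.

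First I would reduce to a local, connected situation. The intersection numbers $E_i\cdot E_j$ are unchanged under shrinking $Y$, and since $Y$ is normal and $f$ is birational, $f^{-1}$ is already a morphism on the complement of a finite set of closed points $p_1,\dots,p_m$; exceptional curves lying over distinct $p_a$ are disjoint, so the full matrix is block diagonal and it suffices to treat one point. I may therefore assume that $Y$ is quasi-projective, that $f$ is an isomorphism over $Y\setminus\{p\}$, and that $E:=f^{-1}(p)=E_1\cup\dots\cup E_n$ is the reduced exceptional locus; moreover $E$ is connected, because $f$ is proper and birational onto the normal $Y$, so $f_*\mathcal O_X=\mathcal O_Y$ and the fibres of $f$ are connected.

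Next I would fix an immersion $Y\hookrightarrow\mathbb P^N$ and let $H$ be a hyperplane section of $Y$ passing through $p$. Since $\dim Y=2$, the point $p$ is not isolated in $\Supp H$, so, writing $f^*H=\widetilde H+\sum_{i=1}^n m_iE_i$ with $\widetilde H$ the part carrying no exceptional component, $\widetilde H$ is a nonzero effective divisor and each $m_i$ is a positive integer, because $E_i\subset\Supp f^*H=f^{-1}(\Supp H)$. Each $E_i$ is a complete curve contracted to a point, so $\mathcal O_X(f^*H)|_{E_i}$ is trivial; hence $f^*H\cdot E_i=0$, that is,
\[
\sum_{j=1}^n m_j\,(E_i\cdot E_j)=-\,(\widetilde H\cdot E_i)\le 0 \qquad (1\le i\le n),
\]
since distinct irreducible curves on the smooth surface $X$ meet nonnegatively. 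Moreover $f(\widetilde H)$ is closed and contains $H\setminus\{p\}$, hence contains $p$, so $\widetilde H$ meets $f^{-1}(p)=E$ and therefore $\widetilde H\cdot E_{i_0}>0$ for some $i_0$; and because $E$ is connected, the relations $E_i\cdot E_j>0$ $(i\ne j)$ link $\{1,\dots,n\}$ into a connected graph.

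Finally, setting $c_{ij}=m_im_j(E_i\cdot E_j)$, the matrix $C=(c_{ij})$ is congruent to $(E_i\cdot E_j)$ via $\operatorname{diag}(m_1,\dots,m_n)$, and has $c_{ij}\ge 0$ for $i\ne j$, $R_i:=\sum_j c_{ij}\le 0$ for all $i$, $R_{i_0}<0$, and connected index graph. Substituting $c_{ii}=R_i-\sum_{j\ne i}c_{ij}$ gives, for every $x=(x_1,\dots,x_n)$,
\[
x^{\mathsf T}Cx=\sum_i R_i x_i^2-\sum_{i<j}c_{ij}(x_i-x_j)^2\le 0 ,
\]
with equality forcing $x_i=x_j$ whenever $c_{ij}>0$, hence $x_1=\dots=x_n$ by connectedness, and then $R_{i_0}x_{i_0}^2=0$ with $R_{i_0}<0$ forces $x=0$; so $C$, and therefore $(E_i\cdot E_j)$, is negative definite. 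I expect the real difficulty to lie not in this last computation but in securing \emph{strictness}: one must simultaneously arrange that some exceptional curve carries a strictly negative weighted row sum — which is why $H$ is taken through $p$, so that $\widetilde H$ genuinely meets $E$ — and that the exceptional fibre $E$ is connected, which is precisely where the normality of $Y$ enters, via $f_*\mathcal O_X=\mathcal O_Y$. Without either of these two inputs the argument yields only negative semidefiniteness.
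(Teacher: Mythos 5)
The paper itself gives no proof of Theorem \ref{f-thm1.1}: it is invoked as a well-known classical fact and serves as the input for Theorem \ref{f-thm1.2}, so there is no internal proof to compare yours against. Judged on its own, your argument is correct and complete; it is essentially Mumford's classical proof, and it works over any algebraically closed field. The reduction to a single point $p$ with connected fibre, the decomposition $f^*H=\widetilde H+\sum_i m_iE_i$ with all $m_i\geq 1$, the vanishing $f^*H\cdot E_i=0$ (the restricted line bundle is pulled back from the point $p$), and the identity $x^{\mathsf T}Cx=\sum_i R_ix_i^2-\sum_{i<j}c_{ij}(x_i-x_j)^2$ are all sound, and you correctly isolate the two essential inputs: normality of $Y$ gives $f_*\mathcal O_X=\mathcal O_Y$ and hence connectedness of $f^{-1}(p)$ and of the dual graph, while taking the hyperplane section through $p$ forces $\widetilde H$ to meet the fibre and yields the strict inequality $R_{i_0}<0$; without either one only gets semidefiniteness, exactly as you say. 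Two small points are worth making explicit: the hyperplane through $p$ must be chosen not to contain $Y$ (possible, since the hyperplanes through $p$ meet only in $p$), and if $f^{-1}(p)$ is finite then Zariski's main theorem makes $f$ an isomorphism near $p$, so that block is empty and one may assume $f^{-1}(p)$ is purely one-dimensional and equal to the union of the exceptional curves over $p$, as your formula $E=f^{-1}(p)=E_1\cup\dots\cup E_n$ tacitly assumes. Neither point affects the correctness of the argument.
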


The main purpose of this paper is to make the 
following theorem by Noboru Nakayama more accessible. 
Here we treat varieties over any algebraically closed field $k$ of 
arbitrary 
characteristic although the original statement is formulated for complex analytic 
spaces. 

\begin{thm}[{Nakayama's theorem, see \cite[Chapter III, 5.10.~Lemma (3)]
{nakayama}}]\label{f-thm1.2}
Let $f\colon X\to Y$ be a projective 
surjective morphism 
from a smooth variety $X$ onto a normal variety $Y$. 
Let $D$ be an $\mathbb R$-divisor 
on $X$. Then there exists an effective $f$-exceptional divisor 
$E$ on $X$ such that 
$$
\left(f_*\mathcal O_X(\lfloor tD\rfloor) \right)^{**}= 
f_*\mathcal O_X(\lfloor t(D+E)\rfloor)
$$ 
holds for every positive real number $t$. 
\end{thm}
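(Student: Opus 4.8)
\smallskip
\noindent\textbf{Proof plan.}
I would deduce Theorem~\ref{f-thm1.2} from Theorem~\ref{f-thm1.1} by cutting $X$ down to a surface with general hyperplane sections. The first step consists of soft reductions. Since both sides are sheaves on $Y$ and $f_{*}$ and $(-)^{**}$ commute with restriction to open subsets, we may assume $Y$ is affine. There are only finitely many $f$-exceptional prime divisors $\Gamma_{1},\dots ,\Gamma_{r}$ on $X$; write $D=D^{\circ}+\sum_{i}d_{i}\Gamma_{i}$, with $D^{\circ}$ supported on the non-$f$-exceptional prime divisors. The key elementary remark is that $f$-exceptional divisors are invisible in codimension one on $Y$: for a prime divisor $P\subset Y$ the image $\overline{f(\Gamma_{i})}$ has codimension $\ge 2$, so $\Gamma_{i}\cap f^{-1}(\operatorname{Spec}\mathcal{O}_{Y,P})=\varnothing$. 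Consequently, for any effective $f$-exceptional divisor $E$, the sheaves $f_{*}\mathcal{O}_{X}(\lfloor t(D+E)\rfloor)$, $f_{*}\mathcal{O}_{X}(\lfloor tD\rfloor)$, $f_{*}\mathcal{O}_{X}(\lfloor tD^{\circ}\rfloor)$ share the same stalk at every codimension-one point of $Y$; in particular $(f_{*}\mathcal{O}_{X}(\lfloor tD\rfloor))^{**}=(f_{*}\mathcal{O}_{X}(\lfloor tD^{\circ}\rfloor))^{**}=:\mathcal{F}_{t}$, one always has $f_{*}\mathcal{O}_{X}(\lfloor t(D+E)\rfloor)\subseteq\mathcal{F}_{t}$, and equality holds as soon as the left side is reflexive (for then it is a reflexive sheaf agreeing with $\mathcal{F}_{t}$ in codimension one). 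Since $X$ is smooth, $f_{*}\mathcal{O}_{X}(\lfloor t(D+E)\rfloor)$ is reflexive if and only if, whenever $V_{0}\subseteq Y$ is an affine open containing the generic point of $W_{i}:=\overline{f(\Gamma_{i})}$ and $g\in K(X)^{\times}$ satisfies $\operatorname{div}_{X}(g)+\lfloor tD^{\circ}\rfloor\ge 0$ along every non-$f$-exceptional prime divisor of $X$ meeting $f^{-1}(V_{0})$ (i.e.\ $g$ extends to a section of $\mathcal{F}_{t}$ over $V_{0}$), one has $\operatorname{ord}_{\Gamma_{i}}(g)\ge -\operatorname{mult}_{\Gamma_{i}}\lfloor t(D+E)\rfloor$. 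Thus, absorbing the $\Gamma_{i}$-part of $D$ into $E$ (enlarging an effective $f$-exceptional divisor only helps), the theorem follows once we find, for each $i$, a constant $a_{i}\ge 0$ independent of $t$ with
\[
\operatorname{ord}_{\Gamma_{i}}(g)\ \ge\ -t\,a_{i}
\]
for all $t>0$, all such $V_{0}$, and all such $g$; one then takes $E=\sum_{i}(\lceil a_{i}\rceil+\lceil |d_{i}|\rceil)\Gamma_{i}$.

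For the displayed estimate I would reduce to surfaces. Fix $i$ and $g$ as above, with $V_{0}$ containing the generic point of $W:=W_{i}$; pick a general closed point $w\in W\cap V_{0}$ and a point $p\in f^{-1}(w)\cap\Gamma_{i}$; fix a projective embedding of $X$; and let $T\subset X$ be a general intersection of $(\dim X-2)$ hyperplane sections through $p$. Using only characteristic-free consequences of Bertini's theorem — a general hyperplane section of a smooth variety is smooth and remains so when forced through a single fixed point; general hyperplane sections preserve reducedness and irreducibility in dimension $\ge 2$; a general hyperplane section meets a fixed subvariety properly — one gets that $T$ is a smooth quasi-projective surface, $f|_{T}$ maps $T$ onto a surface (that is, $\dim f(T)=2$) and is generically finite, and, because $W\cap\overline{f(T)}$ is finite, the component $C_{1}$ of $\Gamma_{i}\cap T$ through $p$ is contracted by $f|_{T}$ to $w$. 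Factoring $f|_{T}$ as $T\xrightarrow{\ \pi\ }T''\to\overline{f(T)}$ through the normalization $T''$ of $\overline{f(T)}$ in $K(T)$, the morphism $\pi$ is projective birational from the smooth surface $T$ onto the normal surface $T''$, and $C_{1}$ is $\pi$-exceptional. By Theorem~\ref{f-thm1.1}, the intersection matrix $M$ of the $\pi$-exceptional curves $C_{1},\dots ,C_{m}$ is negative definite; since its off-diagonal entries are $\ge 0$, $-M$ is a symmetric positive-definite matrix with non-positive off-diagonal entries, hence has non-negative inverse, i.e.\ $M^{-1}\le 0$ entrywise.

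Next I would extract the bound. Put $g_{T}=g|_{T}$ and $\Phi=\operatorname{div}_{T}(g_{T})+\lfloor tD^{\circ}\rfloor|_{T}$ (well defined for general $T$, with $\operatorname{div}_{T}(g_{T})=\operatorname{div}_{X}(g)|_{T}$), and write $\Phi=\Psi+\sum_{k}\phi_{k}C_{k}$ with $\Psi$ supported off $C_{1},\dots ,C_{m}$. For general $T$ no curve of $T$ lies on two distinct prime divisors of $X$, so the coefficient of $\Phi$ along any curve of $T$ equals the coefficient of $\operatorname{div}_{X}(g)+\lfloor tD^{\circ}\rfloor$ along the corresponding prime divisor of $X$; a curve not among the $C_{k}$ is contained in a non-$f$-exceptional prime divisor, which meets $f^{-1}(V_{0})$ if the curve meets $f^{-1}(w)$, so the hypothesis on $g$ yields $\Psi\cdot C_{k}\ge 0$ for every $C_{k}$ contracted to a point lying over $w$ — these curves form a principal block of $M$ disjoint from the remaining $\pi$-exceptional curves. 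Since $\operatorname{div}_{T}(g_{T})\cdot C_{k}=0$ for each proper curve $C_{k}$, one gets, for $C_{k}$ over $w$, $(M\vec{\phi}\,)_{k}=\Phi\cdot C_{k}-\Psi\cdot C_{k}\le \lfloor tD^{\circ}\rfloor|_{T}\cdot C_{k}\le t\,w_{k}$, where $w_{k}=D^{\circ}|_{T}\cdot C_{k}$. Restricting to the block of $M$ over $w$ and multiplying by the corresponding non-positive inverse gives $\phi_{1}\ge t\,(M^{-1}\vec{w})_{1}$, i.e.\ $\operatorname{ord}_{\Gamma_{i}}(g)\ge t\,(M^{-1}\vec{w})_{1}$. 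Finally, the relevant entries of $M$ and the numbers $w_{k}$ are intersection numbers on $X$ against the chosen hyperplanes and are therefore independent of the general choice of $T$, so $a_{i}:=\max\{0,-(M^{-1}\vec{w})_{1}\}$ is a constant independent of $t$, completing the argument.

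I would expect the main obstacle to be the surface reduction: arranging, in arbitrary characteristic, a single smooth surface $T$ on which the prescribed $f$-exceptional divisor $\Gamma_{i}$ becomes $\pi$-exceptional over a normal surface, and verifying carefully that orders of rational functions, effectivity of divisors, and the relevant intersection numbers all behave correctly under restriction to $T$ — the delicate bookkeeping being the non-negativity of $\Psi$ against the exceptional curves over $w$ in the presence of $\pi$-exceptional fibral curves of $f|_{T}$ that are not restrictions of $f$-exceptional divisors of $X$. Everything else is formal, and the only non-trivial input is Theorem~\ref{f-thm1.1}, applied to $\pi\colon T\to T''$.
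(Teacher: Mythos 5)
Your global strategy (kill the exceptional contribution in codimension one, reduce everything to a uniform bound $\operatorname{ord}_{\Gamma_i}(g)\ge -t\,a_i$ that is linear in $t$, and extract that bound from negative definiteness on a surface cut, i.e.\ from Theorem~\ref{f-thm1.1}) is the same philosophy as the paper's, and the formal reductions at the beginning are fine. But the surface reduction itself has a genuine gap: if you cut $X$ only by general hyperplane sections of a projective embedding of $X$ through the point $p$, then the component $C_1$ of $\Gamma_i\cap T$ through $p$ is \emph{not} contracted by $f|_T$ whenever $e:=\dim f(\Gamma_i)\ge 1$. Indeed, for general $w\in W_i$ the fibre $f^{-1}(w)\cap\Gamma_i$ has dimension $n-1-e$, so after cutting by $n-2$ general hyperplanes through $p$ nothing one-dimensional of it survives when $e\ge 1$; equivalently, a general ample curve through $p$ inside $\Gamma_i$ dominates $W_i$ (already for $\Gamma_i=\mathbb P^1\times W_i$ mapping to a curve $W_i$ inside a threefold $Y$ this is clear: hyperplanes through $p$ containing the fibre through $p$ form a proper linear subsystem). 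Consequently $W_i\cap\overline{f(T)}$ is not finite, $C_1$ is not $\pi$-exceptional on $T$, and Theorem~\ref{f-thm1.1} says nothing about it, so no bound on $\operatorname{ord}_{\Gamma_i}(g)=\phi_1$ comes out. The same defect breaks your positivity step: curves of $T$ lying inside exceptional divisors $\Gamma_j$ with $\dim f(\Gamma_j)\ge 1$ are typically \emph{not} among the $C_k$, hence sit in $\Psi$ with coefficient $\operatorname{ord}_{\Gamma_j}(g)+\cdots$, which is exactly what is \emph{not} controlled by the hypothesis on $g$; so $\Psi\cdot C_k\ge 0$ fails and the M-matrix inversion cannot be run. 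The paper's Definition~\ref{f-def3.1} is designed precisely to avoid this: one first cuts with $e$ general members of $f^*|H|$ pulled back from $Y$ (which forces the image of $\Gamma_i$ to become finite, so the resulting curve is honestly contracted and satisfies $\Gamma_i\cdot C<0$, $P\cdot C\ge 0$ for $P\ne\Gamma_i$), and only then with $n-e-2$ very ample divisors on $X$; Lemma~\ref{f-lem3.3} then mixes the different values of $e$ with weights $m_0\gg\cdots\gg m_{n-2}$.

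Two secondary points would also need repair even in the case $e=0$ where your curve is contracted. First, in positive characteristic the smoothness of a general hyperplane section \emph{through a prescribed point} is not a safe ``characteristic-free consequence of Bertini''; the paper explicitly concedes that the cut surface may be singular in characteristic $p>0$ and passes to a desingularization (available for surfaces in any characteristic), which is the robust way out. Second, your constants: the surface $T$ must be chosen general \emph{with respect to the varying function $g$} (to ensure $\operatorname{div}_T(g|_T)=\operatorname{div}_X(g)|_T$ and $\phi_1=\operatorname{ord}_{\Gamma_i}(g)$), but the matrix $M$ is built from single components of $\Gamma_j\cap T$ and from fibral curves of $f|_T$, and such intersection numbers are computed on $T$, not on $X$; they are not obviously independent of the choice of $T$, so the claim that $a_i$ is a constant is unjustified as stated. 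The paper sidesteps this by always working with the full (possibly reducible, disconnected) restrictions $E_j\cap(\text{cut})$, so that all the relevant numbers are honest intersection numbers on $X$ against general members of fixed linear systems, and by packaging the uniformity in $t$ and in $g$ into the purely numerical statements Lemma~\ref{f-lem2.4} and Lemmas~\ref{f-lem3.2}--\ref{f-lem3.5}.
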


Theorem \ref{f-thm1.2} 
has already played a fundamental role 
in \cite{takayama}, \cite{paun-takayama}, 
\cite{campana-paun}, and so on. 
Nakayama's original proof in \cite{nakayama} 
uses his theory of {\em{relative $\sigma$-decompositions}} 
and {\em{relative $\nu$-decompositions}} developed in 
\cite[Chapter III, \S1, \S3, and \S4]{nakayama}. 
Hence we had to study $N_\sigma$ and $N_\nu$ to understand 
Theorem \ref{f-thm1.2}. 
Our argument in this paper clarifies that 
Theorem \ref{f-thm1.2} is an easy consequence of Theorem \ref{f-thm1.1}. 
Roughly speaking, Theorem \ref{f-thm1.2} is a 
variant of the negativity lemma (see, for example, 
\cite[Lemma 2.3.26]{fujino-foundations}). 
We do not need $N_\sigma$ and $N_\nu$. 

\begin{ack}
The author was partially 
supported by JSPS KAKENHI Grant Numbers 
JP16H03925, JP16H06337. 
\end{ack}

\section{Preliminaries}

Let us start with the definition of exceptional divisors. 

\begin{defn}[Exceptional divisors]\label{f-def2.1} 
Let $f\colon X\to Y$ be a proper 
surjective morphism between normal varieties. 
Let $E$ be a Weil divisor on $X$. 
We say that $E$ is {\em{$f$-exceptional}} 
if $\codim_Y\! f(\Supp E)\geq 2$. 
We note that $f$ is not always assumed to be birational. 
\end{defn}

In order to understand Theorem \ref{f-thm1.2}, 
we need the following definitions. 

\begin{defn}\label{f-def2.2}
Let $D=\sum _i a_i D_i$ be an $\mathbb R$-divisor on a normal 
variety $X$ such that $D_i$ is a prime divisor on $X$ for 
every $i$ and that $D_i\ne D_j$ for $i\ne j$. 
We put 
$$
D^+=\sum _{a_i>0} a_i D_i \quad \text{and} 
\quad D^-=-\sum _{a_i<0} a_i D_i \geq 0. 
$$ 
Note that 
$$
D=D^+-D^-
$$
obviously holds. 
For every real number $x$, $\lfloor x\rfloor$ is the integer 
defined by $x-1<\lfloor x\rfloor \leq x$. 
We put 
$$
\lfloor D\rfloor =\sum _i \lfloor a_i\rfloor D_i
$$ 
and call it the {\em{round-down}} of $D$. 
\end{defn}

\begin{defn}\label{f-def2.3}
Let $\mathcal F$ be a coherent sheaf on a normal variety $X$. 
We put 
$$
\mathcal F^*=\mathcal{H}om_{\mathcal O_X} (\mathcal F, \mathcal O_X) 
$$ and 
$$
\mathcal F^{**}=(\mathcal F^*)^*. 
$$ 
Then there exists a natural map 
$
\mathcal F\to \mathcal F^{**}$. 
If this map $\mathcal F\to \mathcal F^{**}$ is an isomorphism, 
then $\mathcal F$ is called a {\em{reflexive}} 
sheaf. 
\end{defn}

For the basic properties of reflexive 
sheaves, see \cite[Section 1]{hartshorne}. 
We prepare an easy lemma for the reader's 
convenience. 

\begin{lem}\label{f-lem2.4}
Let $V$ be a smooth surface and let $C_1,\ldots, C_m$ 
be effective Cartier divisors on $V$ such that 
the intersection matrix $\left(C_i\cdot C_j\right)$ is negative 
definite and that $C_i\cdot C_j\geq 0$ for 
$i\ne j$. 
Let $$D=B+\sum _{i=1}^ma_i C_i$$ be an $\mathbb R$-divisor on $V$. 
Assume that 
\begin{itemize}
\item[(a)] $D\cdot C_i\leq 0$ {\em{(}}resp.~$D\cdot C_i<0${\em{)}} 
for every $i$, and 
\item[(b)] $B\cdot C_i\geq 0$ for every $i$. 
\end{itemize} 
Then $a_i\geq 0$ {\em{(}}resp.~$a_i>0${\em{)}} for every $i$. 
\end{lem}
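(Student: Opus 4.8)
The plan is to run the usual negativity argument by contradiction, after splitting $\sum_i a_iC_i$ into its positive and negative parts. Suppose first that (a) and (b) hold with non-strict inequalities but that $a_j<0$ for some $j$. Set
$$
A=-\sum_{a_i<0}a_iC_i\geq 0,\qquad P=\sum_{a_i\geq 0}a_iC_i\geq 0,
$$
so that $\sum_i a_iC_i=P-A$ and $A\neq 0$. Since every principal submatrix of a negative definite matrix is negative definite and $A$ is a nonzero nonnegative combination of $C_1,\dots,C_m$, we get $A^2<0$.

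First I would use this to produce a ``bad'' curve. Writing $A=\sum_j b_jC_j$ with $b_j\geq 0$ and $b_j>0$ precisely when $a_j<0$, the inequality $0>A^2=\sum_j b_j\,(A\cdot C_j)$ forces some index $j$ with $b_j>0$, hence with $a_j<0$, and with $A\cdot C_j<0$. Fix such a $j$. Then I would compute the intersection number $D\cdot C_j$. We have
$$
D\cdot C_j=B\cdot C_j+P\cdot C_j-A\cdot C_j .
$$
By (b), $B\cdot C_j\geq 0$; by the choice of $j$, $-A\cdot C_j>0$; and $P\cdot C_j\geq 0$ because the coefficient of $C_j$ in $P$ is zero (as $a_j<0$), so $P\cdot C_j$ is a sum of terms $a_i\,(C_i\cdot C_j)$ with $i\neq j$, $a_i\geq 0$, and $C_i\cdot C_j\geq 0$ by hypothesis. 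Hence $D\cdot C_j>0$, contradicting (a). Therefore $A=0$, i.e.\ $a_i\geq 0$ for every $i$.

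For the strict variant, assume (a) holds with $D\cdot C_i<0$ for every $i$; by what we have just proved, $a_i\geq 0$ for all $i$. If $a_j=0$ for some $j$, then
$$
D\cdot C_j=B\cdot C_j+\sum_{i\neq j}a_i\,(C_i\cdot C_j)\geq 0
$$
by (b), $a_i\geq 0$, and $C_i\cdot C_j\geq 0$, contradicting $D\cdot C_j<0$; hence $a_i>0$ for every $i$.

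I do not expect a genuine obstacle: this is the negativity lemma in its simplest form. The only point that needs a little care is the bookkeeping that detaches $C_j$ from the nonnegative part $P$, so that the hypothesis $C_i\cdot C_j\geq 0$ for $i\neq j$ can be applied; negative definiteness is used solely to locate the index $j$ with $A\cdot C_j<0$.
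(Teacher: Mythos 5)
Your proof is correct and follows essentially the same route as the paper: both arguments come down to the fact that, for a negative definite intersection matrix with nonnegative off-diagonal entries, $\left(\sum_i a_iC_i\right)\cdot C_j\leq 0$ (resp.\ $<0$) for all $j$ forces $a_i\geq 0$ (resp.\ $a_i>0$). The only difference is that the paper quotes this linear-algebra step as known, whereas you spell it out via the standard splitting into positive and negative parts, which is a correct and self-contained way to fill in that detail.
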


\begin{rem}\label{f-rem2.5}
In Lemma \ref{f-lem2.4}, 
$C_i$ may be reducible and disconnected. 
It may happen that 
$C_i$ and $C_j$ have some common irreducible components for $i\ne j$. 
\end{rem}

\begin{proof}[Proof of Lemma \ref{f-lem2.4}]
By (b), $B\cdot C_j\geq 0$ for every $j$. 
Hence $\left(\sum _{i=1}^m a_i C_i\right)\cdot C_j\leq 0$ 
(resp.~$<0$) for every 
$j$ by 
(a). Since $\left(C_i\cdot C_j\right)$ is negative definite and 
$C_i\cdot C_j\geq 0$ for 
$i\ne j$, $a_i\geq 0$ (resp.~$>0$) holds for every $i$. 
\end{proof}

\section{Proof of Theorem \ref{f-thm1.2}}

In this section, we prove Theorem \ref{f-thm1.2}. 

\begin{defn}\label{f-def3.1}
Let $f\colon X\to Y$ be a projective surjective morphism from a smooth 
$n$-dimensional 
quasi-projective variety $X$ onto a normal quasi-projective variety $Y$. 
Let $H$ be a very ample Cartier divisor on $Y$ and let 
$A$ be a very ample Cartier divisor on $X$. 
Let $E$ be an $f$-exceptional prime divisor on $X$ with 
$\dim f(E)=e$. 
We put 
$$
C:=E\cap f^*H_1\cap \cdots \cap f^*H_e\cap 
A_1\cap \cdots \cap A_{n-e-2}, 
$$ 
where $H_i$ is a general member of $|H|$ for 
every $i$ and $A_j$ is a general member of $|A|$ for 
every $j$, and call $C$ a {\em{general curve}} 
associated to $E$, $f\colon X\to Y$, $H$, and 
$A$. We sometimes simply say that 
$C$ is a general curve associated to $E$. 
By construction, $E\cdot C<0$ and $P\cdot C\geq 0$ for 
every prime divisor $P$ on $X$ with $P\ne E$. 
We note that $C$ may be reducible and disconnected. 
We also note that 
$$
f^*H_1\cap \cdots \cap f^*H_e\cap 
A_1\cap \cdots \cap A_{n-e-2}
$$ is a smooth surface when the characteristic 
of the base field $k$ is zero by Bertini's theorem. 
Unfortunately, however, it may be singular in general. 
\end{defn}

For the proof of Theorem 
\ref{f-thm1.2}, 
we prepare several lemmas, which are easy applications of Theorem 
\ref{f-thm1.1}. 

\begin{lem}\label{f-lem3.2}
Let $f\colon X\to Y$, $H$, $A$ be as in Definition \ref{f-def3.1}. 
Let $E_1\ldots, E_m$ be $f$-exceptional prime divisors 
on $X$ such that 
$E_i\ne E_j$ for $i\ne j$ and $\dim f(E_i)=e$ for every $i$. 
Let $C_i$ be a general curve associated to $E_i$, $f\colon X\to Y$, 
$H$, and $A$ for every $i$. 
Then the matrix $\left( E_i \cdot C_j\right)$ is negative 
definite and that 
$E_i\cdot C_j\geq 0$ 
for $i\ne j$. Hence there exists an effective divisor $E$ on $X$ such that 
$\Supp E=\sum _{i=1}^m E_i$ and that 
$E\cdot C_i<0$ for every $i$.  
\end{lem}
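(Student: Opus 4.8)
The plan is to reduce everything to a single ambient smooth surface and invoke Theorem \ref{f-thm1.1}. First I would fix general members $H_1,\ldots,H_e \in |H|$ and $A_1,\ldots,A_{n-e-2} \in |A|$ that work simultaneously for all the $E_i$; since finitely many general choices still impose only open non-empty conditions, there is a common choice, and I set
\[
S := f^*H_1\cap\cdots\cap f^*H_e\cap A_1\cap\cdots\cap A_{n-e-2},
\]
which is a surface (smooth in characteristic zero, but possibly singular in general — this is the point flagged in Definition \ref{f-def3.1} that we must be careful about). On $S$ we have $C_i = E_i|_S$ as a curve, and the intersection numbers $E_i\cdot C_j$ computed on $X$ agree with intersection numbers of the divisors $E_i|_S$ and $E_j|_S$ on $S$ by the projection formula, so it suffices to prove negative definiteness of $\left(E_i|_S\cdot E_j|_S\right)$ together with the sign condition $E_i|_S\cdot E_j|_S \ge 0$ for $i\ne j$.

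For the sign condition: when $i\ne j$, $E_i$ and $E_j$ are distinct prime divisors, so $C_j = E_j|_S$ meets $E_i$ properly on $X$ (for general $H_\bullet$, $A_\bullet$), whence $E_i\cdot C_j \ge 0$, exactly as recorded in Definition \ref{f-def3.1}. For negative definiteness, the idea is that the $C_j$ lie over the $e$-dimensional locus $Z := f(E_1)\cup\cdots\cup f(E_m) \subset Y$, and by construction $H_1,\ldots,H_e$ cut $Z$ down to finitely many points and the $A_j$ then cut $X$ further; so each $C_j$ is contracted by (a suitable restriction of) $f$ to a point, and the union $\bigcup_j C_j$ is an $f$-exceptional curve configuration. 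Concretely, after restricting to $S$, the induced morphism $S \to f(S)$ contracts all the $C_j$ to points, and by replacing $S$ by its minimal resolution $\widetilde{S}$ (and $C_j$ by its total transform) we land in exactly the setup of Theorem \ref{f-thm1.1}: a projective birational morphism from a smooth surface contracting the curves in question, whose exceptional intersection matrix is negative definite. Restricting the negative definite form to the sublattice spanned by the $C_j$ keeps it negative definite.

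The main obstacle is the possible singularity of $S$ in positive characteristic: intersection theory on a singular surface is delicate, and Theorem \ref{f-thm1.1} is stated for morphisms \emph{from} a smooth surface. I expect to handle this by passing to the minimal resolution $\mu\colon \widetilde{S}\to S$, pulling each $C_i$ back to $\mu^* C_i$ (as a $\mathbb{Q}$-divisor if $S$ is not $\mathbb{Q}$-factorial, but since each $C_i$ is Cartier on $X$ its restriction is Cartier on $S$, so $\mu^*C_i$ is a genuine divisor), using the projection formula $\mu^*C_i\cdot \mu^*C_j = C_i\cdot C_j$, and noting that $\widetilde{S}\to f(S)$ is a projective birational morphism from a smooth surface contracting all the $\mu^*C_i$ to finitely many points; then Theorem \ref{f-thm1.1} applies. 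Once negative definiteness and the sign condition are established, the final assertion follows from Lemma \ref{f-lem2.4} (with $B=0$): choosing $E := \sum_{i=1}^m \varepsilon_i E_i$ for suitable $\varepsilon_i>0$, or more simply observing that negative definiteness lets us solve $E\cdot C_i < 0$ for all $i$ with a positive combination — indeed take $E = \sum a_i E_i$ where $(a_i)$ is, up to sign, a row of the inverse of the (negative definite, off-diagonally nonnegative, hence inverse-nonpositive) matrix $\left(E_i\cdot C_j\right)$, giving $a_i>0$ and $E\cdot C_i<0$ for every $i$.
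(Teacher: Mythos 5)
Your proposal follows essentially the same route as the paper's proof: restrict everything to the general complete intersection surface $S=f^*H_1\cap\cdots\cap f^*H_e\cap A_1\cap\cdots\cap A_{n-e-2}$, pass to a desingularization when $S$ is singular (positive characteristic), deduce negative definiteness from Theorem \ref{f-thm1.1}, and then obtain $E$ by the Lemma \ref{f-lem2.4}/M-matrix argument. The only imprecision is your assertion that $\widetilde{S}\to f(S)$ is itself a projective birational morphism onto a normal surface: in general it is only generically finite onto its image (and $f(S)$ need not be normal), so one should pass through the Stein factorization of $\widetilde{S}\to Y$ before invoking Theorem \ref{f-thm1.1} --- a repair at the same level of detail that the paper itself suppresses.
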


\begin{proof}
We use the same notation as in Definition \ref{f-def3.1}. 
By restricting everything to 
$$
f^*H_1\cap \cdots \cap f^*H_e\cap 
A_1\cap \cdots \cap A_{n-e-2}, 
$$ 
we may assume that $X$ is a pure two-dimensional quasi-projective 
scheme over $k$, 
$E_i=C_i$ for every $i$, and $E_i\cdot C_j\geq 0$ for $i\ne j$ 
(see Definition \ref{f-def3.1}). 
We first treat the case where the characteristic of the base field $k$ is zero. 
In this case, $X$ is a smooth surface by Bertini's theorem. 
Then it is easy to see 
that $\left(E_i\cdot C_j\right)$ is negative 
definite (see Theorem \ref{f-thm1.1}). 
When the characteristic of the base field $k$ is positive, 
$X$ may be singular. 
In this case, by pulling everything back to a desingularization of 
$$
\left(f^*H_1\cap \cdots \cap f^*H_e\cap 
A_1\cap \cdots \cap A_{n-e-2}\right)_{\mathrm{red}}, 
$$ we can check the negative definiteness of 
$\left(E_i\cdot C_j\right)$ (see Theorem \ref{f-thm1.1}). 
Hence we can always take an effective divisor $E$ with the desired 
properties (see Lemma \ref{f-lem2.4}).  
\end{proof}

\begin{lem}\label{f-lem3.3}
Let $f\colon X\to Y$, $H$, and $A$ be as in Definition \ref{f-def3.1}. 
Let $E_1, \ldots, E_m$ be $f$-exceptional 
prime divisors on $X$ such that 
$E_i\ne E_j$ for $i\ne j$. 
Let $C_i$ be a general curve associated to $E_i$, 
$f\colon X\to Y$, $H$, and $A$ for 
every $i$. 
Then there is an effective divisor $E$ on $X$ such that 
$\Supp E=\sum _{i=1}^m E_i$ and 
that $E\cdot C_i<0$ for every $i$. 
\end{lem}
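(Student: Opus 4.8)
The plan is to reduce to Lemma \ref{f-lem3.2} by partitioning the prime divisors $E_1,\dots,E_m$ according to the dimension of their images under $f$. First I would write $\{1,\dots,m\}=\coprod_{e} S_e$, where $i\in S_e$ precisely when $\dim f(E_i)=e$; since each $E_i$ is $f$-exceptional we have $0\le e\le \dim Y-2$, so there are only finitely many nonempty groups $S_e$. A crucial point is that the general curve $C_i$ associated to $E_i$ depends on $e=\dim f(E_i)$ through the number of hyperplane sections used in Definition \ref{f-def3.1}: curves attached to divisors in different groups $S_e$ live in (or are cut out by) different linear systems and have no reason to interact nicely. So the strategy is to handle one group at a time.

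For each fixed $e$ with $S_e\ne\emptyset$, the divisors $\{E_i\}_{i\in S_e}$ and their general curves $\{C_i\}_{i\in S_e}$ satisfy exactly the hypotheses of Lemma \ref{f-lem3.2}: they are distinct $f$-exceptional prime divisors all with image of dimension $e$, and $C_i$ is a general curve associated to $E_i$. Hence Lemma \ref{f-lem3.2} produces an effective divisor $E^{(e)}$ with $\Supp E^{(e)}=\sum_{i\in S_e}E_i$ and $E^{(e)}\cdot C_i<0$ for every $i\in S_e$. I would then set $E=\sum_e E^{(e)}$, the sum being over the finitely many relevant values of $e$, so that $\Supp E=\sum_{i=1}^m E_i$ as required.

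It remains to check that $E\cdot C_i<0$ for every $i$. Fix $i$ and let $e=\dim f(E_i)$, so $E^{(e)}\cdot C_i<0$. For a value $e'\ne e$, every prime component of $E^{(e')}$ is some $E_j$ with $\dim f(E_j)=e'$, hence $E_j\ne E_i$; by the last sentence of Definition \ref{f-def3.1} we have $E_j\cdot C_i\ge 0$, and therefore $E^{(e')}\cdot C_i\ge 0$. Summing, $E\cdot C_i=E^{(e)}\cdot C_i+\sum_{e'\ne e}E^{(e')}\cdot C_i<0$. This completes the argument; the only subtlety — and the place one must be slightly careful — is making sure that the curves $C_j$ in Definition \ref{f-def3.1} satisfy $E_j\cdot C_i\ge 0$ whenever $E_j\ne E_i$ even across different dimension strata, which is exactly what that definition guarantees since $C_i$ is built from general members of the relevant very ample systems. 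No genuinely hard step is expected: the content is entirely in Lemma \ref{f-lem3.2}, and this lemma is just the bookkeeping needed to drop the hypothesis that all $f(E_i)$ have the same dimension.
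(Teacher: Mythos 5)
Your decomposition by the dimension $e=\dim f(E_i)$ and the appeal to Lemma \ref{f-lem3.2} on each stratum are exactly the paper's strategy, but your final step contains a genuine error: from $E^{(e)}\cdot C_i<0$ and $E^{(e')}\cdot C_i\geq 0$ for $e'\neq e$ you conclude that the sum $E\cdot C_i=E^{(e)}\cdot C_i+\sum_{e'\neq e}E^{(e')}\cdot C_i$ is negative, which does not follow --- adding nonnegative terms to a negative number can make it nonnegative, and nothing in your argument controls the size of the positive contributions $E^{(e')}\cdot C_i$ relative to $|E^{(e)}\cdot C_i|$. So the unweighted sum $E=\sum_e E^{(e)}$ need not satisfy $E\cdot C_i<0$.

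The paper avoids this with two extra ingredients. First, a sharper observation: if $\dim f(E_j)=j'<e=\dim f(E_i)$, then the curve $C_i$ is cut out by $e$ pullbacks $f^*H_1,\ldots,f^*H_e$ of general hyperplane sections of $Y$, and since $\dim f(E_j)=j'<e$ these general members miss $f(E_j)$ entirely; hence $E^{(j')}\cdot C_i=0$ (not merely $\geq 0$) for $j'<e$, while for $j'>e$ one only has $E^{(j')}\cdot C_i\geq 0$. Second, a weighting: one takes $E=\sum_j m_j E^{(j)}$ with $m_0\gg m_1\gg\cdots\gg m_{n-2}>0$. Then for a curve $C_i$ with $\dim f(E_i)=e$ the terms with $j<e$ contribute zero, the term $m_eE^{(e)}\cdot C_i$ is strictly negative, and the possibly positive terms with $j>e$ carry weights $m_j\ll m_e$, so a suitable choice of the $m_j$ makes the total negative. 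Your proposal is repairable precisely by adding these two points; as written, the conclusion $E\cdot C_i<0$ is not justified.
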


\begin{proof}
By Lemma \ref{f-lem3.2}, we can find 
an effective divisor $E^j$ on $X$ such that 
$$\Supp E^j=\sum _{\dim f(E_i)=j}E_i$$ and that 
$E^j\cdot C_i<0$ where 
$C_i$ is a general curve associated to $E_i$ with 
$\dim f(E_i)=j$. 
We note that 
$E^j\cdot C_i=0$ (resp.~$\geq 0$) 
when $C_i$ is a general curve associated to $E_i$ with $\dim f(E_i)>j$ 
(resp.~$<j$) by construction. 
We put 
$$
E=\sum _{j=0}^{n-2} m_j E^j
$$ 
with 
$$
m_0\gg m_1\gg \cdots \gg m_{n-2}>0. 
$$ 
Then $E$ is an effective divisor on $X$ with the desired 
properties. 
\end{proof}

\begin{lem}\label{f-lem3.4}
Let $f\colon X\to Y$, $H$, and $A$ be as in Definition \ref{f-def3.1}. 
Let $D$ be an $\mathbb R$-divisor on $X$ such that 
$\Supp D^-$ is $f$-exceptional and that $D\cdot C\leq 0$ for any 
general curve $C$ associated to any $f$-exceptional 
divisor on $X$. 
Then $D$ is effective. 
\end{lem}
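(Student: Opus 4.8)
The plan is to show that $D^-=0$, which is exactly the effectivity of $D$, and to argue by contradiction. Assuming $D^-\ne 0$, I would write $D^-=\sum_{i\in I}a_iE_i$ with $a_i>0$ and the $E_i$ mutually distinct prime divisors, each $f$-exceptional by hypothesis. The organizing idea, as in the proof of Lemma \ref{f-lem3.3}, is to stratify these prime divisors by $\dim f(E_i)$ and to look first at the \emph{top} stratum: set $e:=\max_{i\in I}\dim f(E_i)$ and $J:=\{\,i\in I\mid \dim f(E_i)=e\,\}\ne\emptyset$, and for each $j\in J$ take a general curve $C_j$ associated to $E_j$, $f\colon X\to Y$, $H$, $A$ as in Definition \ref{f-def3.1}.

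Next I would restrict everything to the surface $V:=f^*H_1\cap\cdots\cap f^*H_e\cap A_1\cap\cdots\cap A_{n-e-2}$ from Definition \ref{f-def3.1}, so that $C_j=E_j\cap V$. The point of passing to the top stratum is that it decouples the lower ones: every $E_i$ with $i\in I\setminus J$ has $\dim f(E_i)<e$, so $f(E_i)$ misses the general complete intersection $H_1\cap\cdots\cap H_e$ on $Y$, hence $E_i\cap V=\emptyset$, and therefore $D^-|_V=\sum_{j\in J}a_jC_j$ involves only the $C_j$. Writing $B:=D^+|_V$, this $B$ is effective and satisfies $B\cdot C_j\ge 0$ for every $j\in J$ because no prime component of $D^+$ equals $E_j$ (the supports of $D^+$ and $D^-$ are disjoint), so this follows from the last sentence of Definition \ref{f-def3.1}; moreover the hypothesis of the lemma gives $D|_V\cdot C_j=D\cdot C_j\le 0$ for every $j\in J$. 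By Lemma \ref{f-lem3.2} the matrix $\left(C_i\cdot C_j\right)_{i,j\in J}$ is negative definite with $C_i\cdot C_j\ge 0$ for $i\ne j$. Thus, applying Lemma \ref{f-lem2.4} to $V$, the curves $\{C_j\}_{j\in J}$, and the divisor $D|_V=B+\sum_{j\in J}(-a_j)C_j$, conditions (a) and (b) hold and we conclude $-a_j\ge 0$ for all $j\in J$, contradicting $a_j>0$. Hence $D^-=0$.

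The one technical point requiring care is the same one already dealt with in the proof of Lemma \ref{f-lem3.2}: in positive characteristic $V$ need not be smooth, so before invoking Lemma \ref{f-lem2.4} I would pull everything back to a desingularization of $V_{\mathrm{red}}$ exactly as there; intersection numbers, effectivity, and the relevant (in)equalities are preserved, so the argument goes through unchanged. Beyond this I do not anticipate a real obstacle — the substance of the proof is just the negative definiteness of Theorem \ref{f-thm1.1}, repackaged through Lemmas \ref{f-lem3.2} and \ref{f-lem2.4}, combined with the observation that restricting to the maximal-dimensional stratum makes all lower strata disappear on $V$.
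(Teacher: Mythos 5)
Your argument is correct, and it reorganizes the paper's proof rather than reproducing it. The paper proves nonnegativity of the coefficients of $D$ along \emph{all} $f$-exceptional divisors by a descending induction on $e=\dim f(E_i)$: at each stratum it pulls back to a desingularization of $\left(f^*H_1\cap\cdots\cap f^*H_e\cap A_1\cap\cdots\cap A_{n-e-2}\right)_{\mathrm{red}}$ and applies Lemma \ref{f-lem2.4}, and it genuinely needs the inductive hypothesis there, because components of $D^-$ with $\dim f(E_i)>e$ do meet that surface and must already be known to have nonnegative coefficients before they can be absorbed into the divisor $B$ of Lemma \ref{f-lem2.4}(b). You avoid the induction by an extremal choice: taking $e$ maximal among the image dimensions of the components of $D^-$ only, so that higher strata of $D^-$ do not exist and the lower strata miss $V=f^*H_1\cap\cdots\cap f^*H_e\cap A_1\cap\cdots\cap A_{n-e-2}$ by the genericity of $H_1,\ldots,H_e$ (since $\dim f(E_i)<e$ implies $f(E_i)\cap H_1\cap\cdots\cap H_e=\emptyset$); then a single application of Lemma \ref{f-lem3.2} plus Lemma \ref{f-lem2.4}, with $B=D^+|_V$ and condition (b) supplied by the last sentence of Definition \ref{f-def3.1}, yields $-a_j\geq 0$ and the contradiction. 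The ingredients are identical (restriction to the general surface, Theorem \ref{f-thm1.1} via Lemma \ref{f-lem3.2}, the desingularization of $V_{\mathrm{red}}$ in positive characteristic handled exactly as in the paper), but your version trades the stratum-by-stratum induction for a one-shot argument at the top stratum of $\Supp D^-$; the paper's induction, on the other hand, delivers the slightly stronger bookkeeping (nonnegativity stratum by stratum for every exceptional component, whether or not it appears in $D^-$), which is what its formulation of the descent uses. Either way the conclusion and the level of rigor (e.g.\ the implicit genericity of the $H_i$ and $A_j$, and the projection-formula compatibilities on the desingularization) match the paper's.
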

\begin{proof}
Let $\mathcal E=\{E_1, \ldots, E_m\}$ be the set of all $f$-exceptional divisors 
on $X$. 
By pulling everything back to a desingularization $V$ 
of 
$$\left(f^*H_1\cap \cdots \cap f^*H_{n-2}\right)_{\mathrm{red}}$$ and 
using Lemma \ref{f-lem2.4} on $V$, 
we obtain that the pull-back of $D$ to $V$ is effective. 
This means that the coefficient of $E_i$ in $D$ is nonnegative when 
$\dim f(E_i)=n-2$. Assume that the coefficient of 
$E_i$ in $D$ is nonnegative when $\dim f(E_i)\geq e+1$. 
Then we pull everything back to a desingularization of 
$$\left(f^*H_1\cap \cdots \cap f^*H_e\cap 
A_1\cap \cdots \cap A_{n-e-2}\right)_{\mathrm{red}}$$ 
and use Lemma \ref{f-lem2.4} again. 
Then we obtain that the coefficient of $E_i$ 
in $D$ is nonnegative when $\dim f(E_i)\geq e$. 
We repeat this process finitely many times. 
Then we finally obtain that $D$ is effective.  
\end{proof}

\begin{lem}\label{f-lem3.5}
Let $f\colon X\to Y$ be a projective surjective morphism 
from a smooth variety $X$ onto a normal 
variety $Y$. Let $D$ be an $\mathbb R$-divisor 
on $X$. 
Then there exists an effective $f$-exceptional 
divisor 
$E$ on $X$ such that 
if $G$ is any $\mathbb R$-divisor 
on $X$ and $U$ is any Zariski open subset of $Y$ with 
the following properties:  
\begin{itemize}
\item[(a)] $G|_{f^{-1}(U)}\equiv _U D|_{f^{-1}(U)}$, 
that is, $G|_{f^{-1}(U)}$ is relatively numerically equivalent 
to $D|_{f^{-1}(U)}$ over $U$, and 
\item[(b)] the support of 
$G^-|_{f^{-1}(U)}$ is $f$-exceptional,  
\end{itemize}
then $(G+E)|_{f^{-1}(U)}$ is effective, equivalently, 
$G^-|_{f^{-1}(U)}\leq E|_{f^{-1}(U)}$ holds. 
\end{lem}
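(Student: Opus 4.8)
The plan is to reduce to the quasi-projective case and then to find, \emph{uniformly in $G$ and $U$}, an upper bound for the multiplicity along each $f$-exceptional prime divisor of the negative part of $G$ over $U$; such bounds will be read off from the negative definiteness in Lemma~\ref{f-lem3.2} by a descending induction on the dimension of the image. To carry out the reduction, cover $Y$ by finitely many affine open sets $Y_\alpha$ and put $X_\alpha=f^{-1}(Y_\alpha)$, $f_\alpha=f|_{X_\alpha}$, so that each $f_\alpha\colon X_\alpha\to Y_\alpha$ is a projective surjective morphism from a smooth quasi-projective variety onto a normal one. There are only finitely many $f$-exceptional prime divisors $E_1,\dots,E_m$ on $X$: each is contained in $f^{-1}(Z)$, where $Z\subsetneq Y$ is the proper closed locus over which the fibre dimension exceeds $\dim X-\dim Y$, and hence, having codimension one, is an irreducible component of $f^{-1}(Z)$. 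Each $E_i$ meets $X_\alpha$ in $\emptyset$ or in an $f_\alpha$-exceptional prime divisor, and every component of any $f_\alpha$-exceptional divisor is of this form. Granting the lemma for each $f_\alpha$ and $D|_{X_\alpha}$, we get effective $f_\alpha$-exceptional divisors $E_\alpha$ on $X_\alpha$; letting $b_i$ be the maximum over the relevant $\alpha$ of the multiplicity of $E_i\cap X_\alpha$ in $E_\alpha$ and setting $E=\sum_i b_iE_i$, we obtain an effective $f$-exceptional divisor with $E|_{X_\alpha}\ge E_\alpha$ for all $\alpha$. For a valid pair $(G,U)$ the restriction $(G|_{X_\alpha},U\cap Y_\alpha)$ is valid for $f_\alpha$ (conditions (a) and (b) restrict to open subsets), so $(G+E)|_{f^{-1}(U)\cap X_\alpha}$ is effective for every $\alpha$, and therefore so is $(G+E)|_{f^{-1}(U)}$.

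So assume $X$ and $Y$ quasi-projective, fix very ample divisors $H$ on $Y$ and $A$ on $X$ as in Definition~\ref{f-def3.1}, and let $E_1,\dots,E_m$ (finitely many, as above) be the $f$-exceptional prime divisors, with $e_i:=\dim f(E_i)$. For a valid pair $(G,U)$ write $c_i\ge 0$ for the multiplicity of $E_i$ in $G^-|_{f^{-1}(U)}$ (so $c_i=0$ if $E_i\cap f^{-1}(U)=\emptyset$); it is enough to produce constants $b_i\ge 0$, \emph{independent of $(G,U)$}, with $c_i\le b_i$ for every $i$. The key point is that whenever $E_i\cap f^{-1}(U)\ne\emptyset$ one may choose a general curve $C_i$ associated to $E_i$ that in addition satisfies $f(C_i)\subseteq U$ — since $f(C_i)$ is a general finite subset of $\overline{f(E_i)}$ and $\overline{f(E_i)}\cap U\ne\emptyset$ — and, when moreover $c_i>0$ (so $E_i$ is not a component of $G^+$), that also satisfies $C_i\not\subseteq\Supp G^+$; the intersection numbers $E_j\cdot C_i$ and $D\cdot C_i$ do not depend on this choice. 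Granting this, fix $(G,U)$ and $i$ with $c_i>0$. As $C_i\subseteq f^{-1}(U)$, hypothesis~(a) gives $G\cdot C_i=D\cdot C_i$, hypothesis~(b) forces every component of $G^-$ meeting $C_i$ to be one of the $E_j$, the general-curve property gives $E_i\cdot C_i<0$ and $E_j\cdot C_i\ge 0$ for $j\ne i$, and $E_j\cdot C_i=0$ whenever $e_j<e_i$ (this last fact is already noted in the proof of Lemma~\ref{f-lem3.3}). Combining these with $G^+\cdot C_i\ge 0$ and $D\cdot C_i=G^+\cdot C_i-G^-\cdot C_i$ gives, after expanding $G^-\cdot C_i$,
$$
-\sum_{j:\ e_j=e_i}(E_j\cdot C_i)\,c_j\ \le\ D\cdot C_i+\sum_{j:\ e_j>e_i}(E_j\cdot C_i)\,c_j .
$$

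Now I would argue by descending induction on $e$, from $e=\dim Y-2$ down to $e=0$, that there is a constant $b^{(e)}\ge 0$ depending only on $D,f,H,A$ with $c_i\le b^{(e)}$ for every valid $(G,U)$ and every $i$ with $e_i=e$; then $b_i:=b^{(e_i)}$ works. For $e=\dim Y-2$ the second sum above is empty. For the inductive step at level $e$, fix $(G,U)$ and set $S=\{j:e_j=e,\ E_j\cap f^{-1}(U)\ne\emptyset,\ c_j>0\}$. For $i\in S$ the inductive hypothesis bounds the right-hand side of the displayed inequality by a constant $M_i$ depending only on $D,f,H,A$, while the left-hand side equals $-\sum_{j\in S}(E_j\cdot C_i)c_j$ because the remaining terms vanish. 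The matrix $N=(E_j\cdot C_i)_{i,j\in S}$ is a principal submatrix of the negative definite matrix of Lemma~\ref{f-lem3.2}, so $-N$ is positive definite; multiplying the $i$-th inequality by $c_i\ge 0$, summing over $i\in S$, and applying the Cauchy--Schwarz inequality yields
$$
\Bigl(\sum_{i\in S}c_i^2\Bigr)^{1/2}\ \le\ \frac{1}{\lambda}\Bigl(\sum_{i\in S}M_i^2\Bigr)^{1/2},
$$
where $\lambda>0$ is the smallest eigenvalue of $-N$. Since $S$ runs over the finitely many subsets of $\{j:e_j=e\}$, the eigenvalues $\lambda$ are bounded below by a positive constant and the $M_i$ stay in a fixed bounded range, so each $c_i$ with $e_i=e$ is bounded by a constant $b^{(e)}$, completing the induction.

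The main obstacle is exactly this uniformity in $(G,U)$: the content of the lemma is that the multiplicities $c_i$ cannot run off to infinity, and it is the descending induction together with the positive definiteness of the intersection matrices of Lemma~\ref{f-lem3.2} that confines them. The step needing the most care is the choice of the auxiliary curves: although $C_i$ must be chosen inside $f^{-1}(U)$, which depends on $U$, the intersection numbers $E_j\cdot C_i$ and $D\cdot C_i$ entering the estimate do not, and this is what makes the constants $b^{(e)}$ — hence the divisor $E$ — genuinely independent of $(G,U)$.
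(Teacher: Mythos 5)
Your proof is correct, and while it shares the paper's skeleton (reduce to the affine/quasi-projective case by a finite cover, work with general curves of Definition~\ref{f-def3.1} chosen to lie over $U$ so that relative numerical equivalence gives $G\cdot C_i=D\cdot C_i$, stratify the exceptional divisors by $\dim f(E_i)$, and feed everything into the negative definiteness of Lemma~\ref{f-lem3.2}), the way you extract the conclusion is genuinely different. The paper first builds a single divisor $E$ with $E\cdot C<0$ for every general curve (Lemma~\ref{f-lem3.3}), scales it so that $(D+E)\cdot C\leq 0$, and then applies the soft negativity-type statement of Lemma~\ref{f-lem3.4} (which goes back to Lemma~\ref{f-lem2.4} on cut-down surfaces) to the varying divisor $(G+E)|_{f^{-1}(U)}$; the effectivity comes out qualitatively, with no explicit bound on $G^-$. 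You instead bypass Lemmas~\ref{f-lem2.4}, \ref{f-lem3.3} and \ref{f-lem3.4} entirely and prove a priori bounds $c_i\leq b_i$, uniform in $(G,U)$, on the coefficients of $G^-|_{f^{-1}(U)}$, by descending induction on $\dim f(E_i)$ together with a smallest-eigenvalue/Cauchy--Schwarz estimate for the finitely many principal submatrices of the matrices in Lemma~\ref{f-lem3.2}, and then simply set $E=\sum_i b_iE_i$; your choice of $C_i$ avoiding $\Supp G^+$ plays the role that hypothesis (b) of Lemma~\ref{f-lem2.4} plays inside the paper's Lemma~\ref{f-lem3.4}. What the paper's route buys is modularity and softness (all surface-cutting and desingularization is quarantined in Lemmas~\ref{f-lem3.2}--\ref{f-lem3.4}); what yours buys is an explicit, quantitative $E$ and a proof that is pure linear algebra once Lemma~\ref{f-lem3.2} is granted. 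The one point you should make explicit, since your whole uniformity in $(G,U)$ rests on it, is the assertion that $D\cdot C_i$ and $E_j\cdot C_i$ do not depend on the general choice (e.g.\ by noting that $E_i\cap f^*H_1\cap\cdots\cap f^*H_{e_i}$ is a complete scheme, flat over a dense open subset of $\overline{f(E_i)}$, so these degrees are constant in the family); this is true and the paper relies on the same kind of invariance implicitly (for instance when one integer $m$ is chosen with $(D+mE)\cdot C\leq 0$ for \emph{all} general curves $C$), so it is not a gap relative to the paper, but in your argument it deserves a sentence of proof.
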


\begin{proof}
In Step \ref{f-step3.5.1}, we will treat the case where $Y$ is affine. 
In Step \ref{f-step3.5.2}, we will treat the general case. 
\setcounter{step}{0}
\begin{step}\label{f-step3.5.1}
In this step, we will construct a desired divisor $E$ under 
the extra assumption that 
$Y$ is affine. 

When $Y$ is affine, we can take an effective 
$f$-exceptional divisor $E$ on $X$ such that 
$E\cdot C<0$ for any general curve associated 
to any $f$-exceptional divisor on $X$ by Lemma \ref{f-lem3.3}. 
By replacing $E$ with $mE$ for some 
positive integer $m$, we may further assume 
that 
$(D+E)\cdot C\leq 0$ for 
any general curve $C$ associated to 
any $f$-exceptional divisor on $X$. 
Let $U$ be any Zariski open 
subset of $Y$ and let 
$G$ be any $\mathbb R$-divisor 
on $X$ satisfying (a) and (b). 
Then, 
$$
(G+E)|_{f^{-1}(U)}\cdot C=(D+E)|_{f^{-1}(U)}\cdot C\leq 0
$$ 
holds for any general curve $C$ associated to any $f$-exceptional 
divisor on $X$ by (a) and the construction of $E$, and 
the support of $(G+E)^-|_{f^{-1}(U)}$ is $f$-exceptional by (b). 
Hence, by Lemma \ref{f-lem3.4}, $(G+E)|_{f^{-1}(U)}$ is effective. 
\end{step}

\begin{step}\label{f-step3.5.2}
In this step, we will treat the general case. 

We take a finite affine Zariski 
open cover $$Y=\bigcup _{\alpha} U_\alpha$$ of $Y$. 
We consider $f\colon f^{-1}(U_{\alpha})\to U_{\alpha}$ for every $\alpha$. 
By Step \ref{f-step3.5.1}, 
we have an effective $f$-exceptional divisor $E_\alpha$ on $f^{-1}(U_\alpha)$ 
with the desired property for every $\alpha$. 
Let $E$ be an effective $f$-exceptional divisor 
on $X$ such that $E_\alpha\leq E|_{f^{-1}(U_\alpha)}$ holds for 
every $\alpha$. 
Then $E$ obviously satisfies the desired property. 
\end{step} 
We complete the proof. 
\end{proof}

Let us prove Theorem \ref{f-thm1.2}. 

\begin{proof}[Proof of Theorem \ref{f-thm1.2}]
We prove this theorem in the following two steps. 
\setcounter{step}{0}
\begin{step}\label{f-step1.2.1}
Let $E$ be any $f$-exceptional 
divisor on $X$. 
Then 
$$
\left(f_*\mathcal O_X(\lfloor tD\rfloor) \right)^{**}= 
\left(f_*\mathcal O_X(\lfloor t(D+E)\rfloor)\right)^{**} 
$$ 
holds. 
Therefore, the inclusion 
$$
f_*\mathcal O_X(\lfloor t(D+E)\rfloor)\subset 
\left(f_*\mathcal O_X(\lfloor tD\rfloor) \right)^{**}
$$ 
always holds. 
\end{step} 
\begin{step}\label{f-step1.2.2}
Let $E$ be an effective $f$-exceptional divisor 
on $X$ satisfying the property in Lemma \ref{f-lem3.5}. 
Let $\Sigma$ denote the smallest Zariski closed subset of $Y$ such 
that $f$ is equidimensional over $Y\setminus \Sigma$. 
Note that $\codim _Y\! \Sigma\geq 2$. 
By \cite[Corollary 1.7]{hartshorne}, 
$f_*\mathcal O_X(\lfloor tD\rfloor)|_{Y\setminus \Sigma}$ 
is a reflexive sheaf on $Y\setminus \Sigma$. 
Let $\mathcal E=\{E_1, \ldots, E_m\}$ be the set of 
all $f$-exceptional divisors on $X$. 
Hence $\codim_X\left(f^{-1}(\Sigma)\setminus 
\sum _{i=1}^m E_i\right)\geq 2$ holds by definition. 
We take any affine Zariski open subset $U$ of $Y$. 
Then we have the following natural inclusion and equalities 
\begin{equation}\label{f-eq1}
\begin{split}
\Gamma \left(U, \left(f_*\mathcal O_X(\lfloor tD\rfloor)\right)^{**}\right)
&\subset  
\Gamma \left(U\setminus \Sigma, \left(f_*\mathcal O_X
(\lfloor tD\rfloor)\right)^{**}\right)
\\&= 
\Gamma \left(U\setminus \Sigma, f_*\mathcal O_X(\lfloor tD\rfloor)\right)
\\ &= 
\Gamma \left(f^{-1}(U)\setminus f^{-1}(\Sigma), 
\mathcal O_X(\lfloor tD\rfloor)\right)
\\ &= 
\Gamma \left(f^{-1}(U)\setminus \sum _{i=1}^m
E_i, 
\mathcal O_X(\lfloor tD\rfloor)\right), 
\end{split}
\end{equation} 
where the inclusion follows from the torsion-freeness of 
$\left(f_*\mathcal O_X(\lfloor tD\rfloor)\right)^{**}$, the first equality 
is due to the fact that 
$f_*\mathcal O_X(\lfloor tD\rfloor)|_{Y\setminus \Sigma}$ 
is a reflexive sheaf on $Y\setminus \Sigma$, the second 
equality is obvious, and  
the final equality follows from 
$\codim _X\left(f^{-1}(\Sigma)\setminus 
\sum _{i=1}^m E_i\right)\geq 2$. 
We note that 
\begin{equation*}
\begin{split}
&\Gamma \left(f^{-1}(U)\setminus \sum _{i=1}^m
E_i, 
\mathcal O_X(\lfloor tD\rfloor)\right)
=\{\phi\in k(X)\,|\, \left((\phi)+\lfloor tD\rfloor\right)|_{f^{-1}(U)
\setminus \sum E_i}\geq 0\}\cup \{0\}, 
\end{split} 
\end{equation*} 
where $k(X)$ stands for the rational function field of $X$ and 
$(\phi)$ is the divisor associated to $\phi\in k(X)$. 
By the definition of $E$, 
if 
$$\left((\phi)+\lfloor tD\rfloor\right)|_{f^{-1}(U)
\setminus \sum E_i}\geq 0
$$ 
holds, 
then 
$$\left((\phi)+ t(D+E)\right)|_{f^{-1}(U)}\geq 0
$$ 
holds. 
Therefore, by taking the round-down, 
we obtain that 
$$\left((\phi)+ \lfloor t(D+E)\rfloor\right)|_{f^{-1}(U)}\geq 0. 
$$ 
This implies that 
\begin{equation}\label{f-eq2}
\begin{split}
&\Gamma \left(f^{-1}(U)\setminus \sum _{i=1}^m
E_i, 
\mathcal O_X(\lfloor tD\rfloor)\right)
\\&\subset 
\{\phi\in k(X)\,|\, \left((\phi)+\lfloor t(D+E)\rfloor\right)|_{f^{-1}(U)}
\geq 0\}\cup \{0\}\\ 
&= \Gamma \left(f^{-1}(U), \mathcal O_X(\lfloor t(D+E)\rfloor)\right)
\\ &= 
\Gamma \left(U, f_*\mathcal O_X(\lfloor t(D+E)\rfloor)\right). 
\end{split}
\end{equation}
Hence we get the following 
inclusion 
$$
\Gamma \left(U, \left(f_*\mathcal O_X(\lfloor tD\rfloor)\right)^{**}\right)
\subset 
\Gamma \left(U, f_*\mathcal O_X(\lfloor t(D+E)\rfloor)\right) 
$$ 
by \eqref{f-eq1} and \eqref{f-eq2}. 
This means that the opposite inclusion 
$$
\left(f_*\mathcal O_X(\lfloor tD\rfloor)\right)^{**}
\subset f_*\mathcal O_X(\lfloor t(D+E)\rfloor)
$$ 
holds. 
\end{step} 
By combining Step \ref{f-step1.2.1} with Step \ref{f-step1.2.2}, 
we see that 
the effective $f$-exceptional divisor $E$ on $X$ with the property in Lemma 
\ref{f-lem3.5} is a desired one. 
\end{proof}

Finally, we note the following statement, which is similar to 
\cite[Chapter III, 5.10.~Lemma (4)]{nakayama}. 

\begin{prop}\label{f-prop3.6} 
Let $f\colon X\to Y$ be a projective 
surjective morphism from a smooth quasi-projective 
variety $X$ onto a normal quasi-projective variety $Y$. 
Let $D$ be an $\mathbb R$-divisor on $X$. 
Assume that $D\cdot C\leq 0$ for any general 
curve $C$ associated to any $f$-exceptional 
divisor on $X$. 
Then $f_*\mathcal O_X(\lfloor D\rfloor)$ is reflexive. 
\end{prop}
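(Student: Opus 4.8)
The plan is to reduce Proposition \ref{f-prop3.6} to the reflexivity criterion already used in Step \ref{f-step1.2.2} of the proof of Theorem \ref{f-thm1.2}, namely the fact that a torsion-free sheaf on a normal variety is reflexive if and only if it agrees with its sections computed away from a closed subset of codimension $\geq 2$. Concretely, since reflexivity is local on $Y$, I would first reduce to the case where $Y$ is affine. Let $\Sigma\subset Y$ be the smallest closed subset such that $f$ is equidimensional over $Y\setminus\Sigma$; then $\codim_Y\Sigma\geq 2$, and by \cite[Corollary 1.7]{hartshorne} the sheaf $\mathcal F:=f_*\mathcal O_X(\lfloor D\rfloor)$ is reflexive over $Y\setminus\Sigma$. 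As in \eqref{f-eq1}, letting $\mathcal E=\{E_1,\ldots,E_m\}$ be the set of all $f$-exceptional divisors on $X$, one has $\codim_X\bigl(f^{-1}(\Sigma)\setminus\sum E_i\bigr)\geq 2$, so for any affine open $U\subset Y$,
$$
\Gamma\bigl(U,\mathcal F^{**}\bigr)\subset
\Gamma\bigl(U\setminus\Sigma,\mathcal F\bigr)=
\Gamma\Bigl(f^{-1}(U)\setminus\textstyle\sum_{i=1}^m E_i,\ \mathcal O_X(\lfloor D\rfloor)\Bigr).
$$

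The crux is then to show that every $\phi\in k(X)$ with $\bigl((\phi)+\lfloor D\rfloor\bigr)|_{f^{-1}(U)\setminus\sum E_i}\geq 0$ actually satisfies $\bigl((\phi)+\lfloor D\rfloor\bigr)|_{f^{-1}(U)}\geq 0$, which would give the reverse inclusion $\Gamma(U,\mathcal F^{**})\subset\Gamma(U,\mathcal F)$ and hence reflexivity. The divisor $(\phi)+\lfloor D\rfloor$ is already effective outside $\sum E_i$, so the only prime divisors where it could be negative are among $E_1,\ldots,E_m$. To control those, set $B:=\bigl((\phi)+\lfloor D\rfloor\bigr)^+$ and write $\bigl((\phi)+\lfloor D\rfloor\bigr)=B-\Delta$ where $\Supp\Delta$ is $f$-exceptional. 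For any $f$-exceptional divisor $E_k$ and its associated general curve $C$, we have $B\cdot C\geq 0$ (since $C$ meets $B$ properly, $B$ being effective — here $C\not\subset\Supp B$ because $C$ is general, or one argues on the desingularized surface as in Lemma \ref{f-lem3.4}), while $D\cdot C\leq 0$ by hypothesis; and since $(\phi)\cdot C=0$ and $D-\lfloor D\rfloor$ has support of dimension $<\dim f(E_k)$ pieces behaving nonnegatively against $C$ of the relevant type, one deduces $\bigl((\phi)+\lfloor D\rfloor\bigr)\cdot C\leq 0$ for the appropriate curves $C$. Applying Lemma \ref{f-lem3.4} to the $\mathbb R$-divisor $(\phi)+\lfloor D\rfloor$ — whose negative part is $f$-exceptional and which pairs nonpositively with all general curves — yields that $(\phi)+\lfloor D\rfloor$ is effective on all of $f^{-1}(U)$, as desired.

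I expect the main obstacle to be a careful verification of the hypothesis ``$D\cdot C\le 0$ for general curves'' transferring to ``$\bigl((\phi)+\lfloor D\rfloor\bigr)\cdot C\le 0$'' for the curves $C$ that Lemma \ref{f-lem3.4} requires. The subtlety is that rounding down changes $D$ by the non-$f$-exceptional components' fractional parts too, and $(\phi)$ contributes non-$f$-exceptional components; one must check that these extra effective contributions intersect the relevant general curves nonnegatively so that the inequality is preserved. This is exactly the kind of bookkeeping handled inside the proof of Lemma \ref{f-lem3.4} (stratifying by $\dim f(E_i)$ and desingularizing the generic surface slice), so I would invoke that lemma after first observing that $\bigl((\phi)+\lfloor D\rfloor\bigr)^-$ is supported on $f$-exceptional divisors and that $\bigl((\phi)+\lfloor D\rfloor\bigr)\cdot C\leq D\cdot C\leq 0$ because the difference $(\phi)+\lfloor D\rfloor-D=(\phi)-(D-\lfloor D\rfloor)$ restricted to a general surface slice splits as (effective, meeting $C$ properly) minus (a divisor $\leq 0$ avoiding $C$), giving the needed sign. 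Once Lemma \ref{f-lem3.4} applies, the reverse inclusion and hence reflexivity follow immediately, completing the proof.
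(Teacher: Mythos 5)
Your overall route (reduce to the extension statement $\Gamma(U,\mathcal F^{**})\subset\Gamma(U,\mathcal F)$ via \cite[Corollary 1.7]{hartshorne}, the codimension count $\codim_X(f^{-1}(\Sigma)\setminus\sum E_i)\geq 2$, and a negativity argument over $U$) is exactly the paper's, which simply reruns Step \ref{f-step1.2.2} of the proof of Theorem \ref{f-thm1.2} with $E=0$, $t=1$. But the crucial step of your argument has a genuine gap: you apply Lemma \ref{f-lem3.4} to the divisor $(\phi)+\lfloor D\rfloor$ and assert $((\phi)+\lfloor D\rfloor)\cdot C\leq D\cdot C\leq 0$. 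This inequality is false in general. Since $(\phi)\cdot C=0$ (the general curve $C$ is complete, being contracted by the projective morphism $f$), the question is whether $\lfloor D\rfloor\cdot C\leq D\cdot C$, i.e.\ whether $(D-\lfloor D\rfloor)\cdot C\geq 0$; but the fractional part $D-\lfloor D\rfloor$ need not ``avoid $C$'': its support can contain the very exceptional divisor $E_k$ that $C$ sits inside, and $E_k\cdot C<0$. For instance, if $E_k\cdot C=-2$, $P$ is a non-exceptional prime divisor with $P\cdot C=1$, and $D=\tfrac12 E_k+P$, then $D\cdot C=0\leq 0$ while $\lfloor D\rfloor\cdot C=P\cdot C=1>0$, so the hypothesis of Lemma \ref{f-lem3.4} fails for $(\phi)+\lfloor D\rfloor$. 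Your parenthetical justification (``effective, meeting $C$ properly, minus a divisor avoiding $C$'') does not hold: $(\phi)$ is not effective and $D-\lfloor D\rfloor$ does not avoid $C$.

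The fix is the device the paper uses: do not round down before applying the negativity lemma. Given $\phi$ with $((\phi)+\lfloor D\rfloor)|_{f^{-1}(U)\setminus\sum E_i}\geq 0$, set $G:=(\phi)+D$. Then $G\cdot C=D\cdot C\leq 0$ for every general curve $C$ (here the hypothesis of the proposition is used, with no rounding error), and since $D\geq\lfloor D\rfloor$ the negative part of $G|_{f^{-1}(U)}$ is supported on $f$-exceptional divisors. Lemma \ref{f-lem3.4} (equivalently, the proof of Lemma \ref{f-lem3.5} with $E=0$) gives $((\phi)+D)|_{f^{-1}(U)}\geq 0$, and only now does one take the round-down, using that $(\phi)$ has integer coefficients, to conclude $((\phi)+\lfloor D\rfloor)|_{f^{-1}(U)}\geq 0$. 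With this correction the rest of your argument goes through and coincides with the paper's proof.
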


\begin{proof} 
If we further assume that $Y$ is quasi-projective and $D\cdot C\leq 0$ for 
any general curve $C$ associated to any $f$-exceptional divisor on 
$X$ in Lemma \ref{f-lem3.5}, then we see that 
$G|_{f^{-1}(U)}$ is effective by Lemma \ref{f-lem3.4} and 
the proof of Lemma \ref{f-lem3.5}. 
Hence, the argument in Step \ref{f-step1.2.2} in the 
proof of Theorem \ref{f-thm1.2} works with $E=0$ 
and $t=1$. 
Therefore, we obtain 
$$
f_*\mathcal O_X(\lfloor D\rfloor)=\left(f_*\mathcal O_X(\lfloor 
D\rfloor)\right)^{**}. 
$$ 
This means that $f_*\mathcal O_X(\lfloor D\rfloor)$ is reflexive. 
\end{proof}


\end{document}